\theoremstyle{plain}
\newtheorem{cor}{Corollary}
\newtheorem{lem}{Lemma}
\newtheorem{thm}{Theorem}
\numberwithin{equation}{section}
\theoremstyle{definition}
\newtheorem{exam}{Example}
\newtheorem{rmk}{Remark}
\newcommand{\lra}{{\longrightarrow}}
\newcommand{\I}{\item}
\newcommand{\II}{\begin{enumerate}}
\newcommand{\III}{\end{enumerate}}
\begin{document}

\title[A complete Hyting algebra whose Scott space is non-sober]
{A complete Heyting algebra whose Scott space is non-sober}
\author[X. Xu]{Xu Xiaoquan}
\address{School of Mathematics and Statistics\\
Minnan Normal University\\Fujian, Zhangzhou 363000, P.R. China} \email{xiqxu2002@163.com}

\author[X. Xi]{Xi Xiaoyong}
\address{School of mathematics and Statistics\\
Jiangsu Normal University\\ Jiangsu, Xuzhou, P.R. China}
\email{littlebrook@jsnu.edu.cn}

\author[D. Zhao]{Zhao Dongsheng}
\address{Mathematics and Mathematics Education\\
National Institute of Education Singapore\\
Nanyang Technological University\\
1 Nanyang Walk\\
Singapore 637616} \email{dongsheng.zhao@nie.edu.sg}

\subjclass[2000]{06B35, 06B30, 54A05} \keywords{Scott topology;
sober space; well-filtered space; upper space}

\begin{abstract}
We prove that (1) for any complete lattice $L$, the set
$\mathcal{D}(L)$ of all nonempty saturated compact subsets of the
Scott space of $L$ is a complete Heyting algebra (with the reverse
inclusion order); and (2) if the Scott space  of a complete lattice $L$
is non-sober, then the Scott space of $\mathcal{D}(L)$ is non-sober.
Using these results and the Isbell's example of a non-sober complete
lattice, we deduce that there is a complete Heyting algebra  whose
Scott space is non-sober, thus give a positive answer to a problem
posed by Jung. We will also prove that a $T_0$ space  is
well-filtered iff its upper space (the set $\mathcal{D}(X)$ of all
nonempty saturated compact subsets of $X$ equipped with the upper
Vietoris topology) is well-filtered, which answers another open
problem.
\end{abstract}

\maketitle

Sobriety and well-filteredness are two of the most important
properties for non-Hausdorff topological spaces. The Scott space of every
domain (continuous directed complete posets) is sober. Johnstone
\cite{johnstone-81} gave the first example of a dcpo whose Scott
space is non-sober. Soon after that, Isbell \cite{isbell}
constructed a complete lattice whose Scott space is non-sober.
The general problem in this line is whether each object in
a classic class of lattices has a sober Scott space. The Isbell's non-sober
complete lattice is not distributive. Thus recently Jung asked
whether there is  a distributive complete lattice whose Scott
space is non-sober. In this paper we shall give a positive answer
to Jung's problem. The main structure we shall use is the poset
$\mathcal{D}(X)$ of all nonempty saturated compact subsets of a
topological space $X$ equipped with the reverse inclusion order.
We first show that for any complete lattice $L$, the poset
$\mathcal{D}(L)$ of all nonempty saturated compact subsets of the
Scott space of $L$ is a complete Heyting algebra. Then we prove that
for a certain type $T_0$ spaces $X$, if $X$ is non-sober, then the
Scott space of $\mathcal{D}(X)$ is non-sober. An immediate
conclusion is that for any complete lattice $L$, if the Scott space
of $L$ is non-sober, then the Scott space of the complete Heyting
algebra $\mathcal{D}(L)$ is non-sober. Taking $L$ to be the Isbell's
example, we obtain a complete Heyting algebra whose Scott space is
non-sober, thus answer Jung's problem.

Heckmann and Keimel \cite{Klause-Heckmann} proved that a space $X$
is sober if and only if the upper space $\mathcal{D}(X)$ of $X$ is
sober. In \cite{xi-zhao-MSCS-well-filtered}, Xi and Zhao proved that
a space $X$ is well-filtered iff its upper space $\mathcal{D}(X)$ is a d-space.
But, it is still not known, as pointed out in
\cite{xi-zhao-MSCS-well-filtered}, whether it is true that a space
is well-filtered if and only if the upper space of $X$ is
well-filtered. In the last part of this paper we will give a
positive answer to this problem.

\section{Preliminaries}
A \emph{complete Heyting algebra} is a complete lattice $L$  satisfying  the following infinite distributive law:

\[ a{\wedge}{ \bigvee}\{a_i: i{\in} I\}={\bigvee}\{a{\wedge} a_i: i{\in} I\}\]

\noindent for any $a{\in} L$ and $\{a_i: i{\in} I\}{\subseteq} L$. Such a
complete lattice is also called a \emph{frame}. Apparently every
complete Heyting algebra is a distributive complete lattice.

An element $p$ of a meet-semilattice $S$ is a prime element if for any $a, b\in S$, $a\wedge b\le p$ implies $a\le p$ or $b\le p$. A frame (or complete Heyting algebra) $A$ is called {\sl spacial} if every element of $A$ can be expressed as a  meet of prime elements. It is well-known that a complete lattice $L$ is a spacial frame iff it is isomorphic to the lattice of all open subsets of some topological space (cf. \cite{johnstone-93}).

A subset $U$ of a poset $(P, \leq)$ is \emph{Scott open}  if i) $U$
is an upper set (that is, $U={\uparrow}U=\{x{\in} P: y{\le} x \mbox{ for some }
y{\in} U\}$), and ii) for any directed subset $D{\subseteq} P$,
$\bigvee\!\!D{\in} U$ implies $D\cap\!U{\not=}\emptyset$ whenever
$\bigvee\!\! D$ exists. All Scott open sets of a poset $P$ form a
topology on $P$, denoted by $\sigma(P)$ and called the \emph{Scott
topology} on $P$. The space $(P, \sigma(P))$ is denoted by
$\Sigma\!P$, called the \emph{Scott space} of $P$.

A poset is called a \emph{directed complete poset} (\emph{dcpo}, for
short) if every directed subset of the poset has a supremum. For
more about Scott topology and dcpos, see
\cite{Gierz-Cam}\cite{Jean-2013}.

 A subset $A$ of a topological space is called \emph{saturated} if $A$ equals the intersection of all open sets containing it.  A $T_0$  space $X$ is  \emph{well-filtered} if for any open set $U$ and filtered family  $\mathcal{F}$ of
 saturated compact subsets of $X$, $\bigcap\mathcal{F}{\subseteq} U$ implies $F{\subseteq} U$ for some $F{\in}\mathcal{F}$.

A subset $A$ of a space $X$ is \emph{irreducible} if for any closed
subsets $F_1, F_2$ of $X$,  $F{\subseteq} F_1{\cup} F_2$ implies
$F{\subseteq} F_1$ or $F{\subseteq} F_2$. Obviously, the closure of every singleton set
is irreducible. A space $X$ is called \emph{sober} if every
irreducible closed set of $X$ is the closure of a unique singleton set. It
is well known that every sober space is well-filtered. Jonstone
\cite{johnstone-81} constructed  the first example of a dcpo whose Scott
space is non-sober. Isbell \cite{isbell} constructed a complete
lattice whose Scott space is non-sober. Kou \cite{Kou} gave the
first example of a dcpo whose Scott space is well-filtered but
non-sober.

The \emph{specialization order} $\leq_{\tau}$ on a $T_0$ space $(X,
\tau)$ is defined by $x{\leq_{\tau}} y$ iff $x{\in} cl(\{y\})$, where
$cl(\{y\})$ is the closure of set $\{y\}$. A space $(X, \tau)$ is called a \emph{d-space} (or \emph{monotone convergence space}) if $(X, \leq_{\tau})$ is a dcpo
 and $\tau {\subseteq} \sigma{((X, \leq_\tau))}$ (cf. \cite{Gierz-Cam}).

\section{The existence of a complete Heyting algebra whose Scott space is non-sober}

For any topological space $X$, following Heckmann and
Keimel \cite{Klause-Heckmann} we shall use $\mathcal{D}(X)$ to
denote the set of all nonempty compact saturated subsets of $X$. The
\emph{upper Vietoris topology} on $\mathcal{D}(X)$ is  the topology
that has $\{\Box\,U: U{\in}\mathcal{O}(X)\}$ as a base,
where $ \Box\,U=\{K{\in} \mathcal{D}(X): K{\subseteq} U\}$. The set
$\mathcal{D}(X)$ equipped with the upper Vietoris topology is called
the \emph{Smyth power space} or \emph{upper space} of $X$ (cf.
\cite{Klause-Heckmann}\cite{Schalk}).

The specialization order on the upper space $\mathcal{D}(X)$ is the reverse inclusion order $\supseteq$.
In what follows, the partial order on $\mathcal{D}(X)$ we will concern is just the reverse inclusion order.

For a poset $P$, we shall use $\mathcal{D}(P)$ to denote the poset
of all nonempty compact saturated subsets of the Scott space $(P,
\sigma(P))$.

A space $X$ is called \emph{coherent} if the intersection of any two
compact saturated subsets in $X$  is compact.

\begin{lem}\label{k(L)is a distributive lattice}
For any complete lattice $L$, $\mathcal{D}(L)$ is a complete Heyting algebra.
\end{lem}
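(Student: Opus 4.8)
The plan is to work throughout with the reverse inclusion order on $\mathcal{D}(L)$, and first record the easy structural facts. Since any Scott-open set is an upper set, an open set containing the least element $\bot$ of $L$ must be all of $L$; hence $\Sigma L$ is compact and $L\in\mathcal{D}(L)$ is the bottom of $\mathcal{D}(L)$. Dually, every nonempty saturated set is a nonempty upper set and therefore contains the top element $\top$ of $L$; consequently $\{\top\}=\,\uparrow\!\top$ is compact saturated and is contained in every member of $\mathcal{D}(L)$, so it is the top of $\mathcal{D}(L)$, and \emph{every} intersection of members of $\mathcal{D}(L)$ is nonempty. Finally, a finite union of compact saturated sets is compact saturated, so the binary meet in $\mathcal{D}(L)$ (the infimum in reverse inclusion) is just $K_1\wedge K_2=K_1\cup K_2$.

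The reduction I would then set up is this: if I can show that $\mathcal{D}(L)$ is closed under \emph{arbitrary} intersections, everything follows. Indeed, for any family $\{K_i\}_{i\in I}\subseteq\mathcal{D}(L)$ the set $\bigcap_i K_i$ is the largest element of $\mathcal{D}(L)$ contained in each $K_i$, i.e.\ the supremum $\bigvee_i K_i$ in the reverse inclusion order. A poset with all suprema and a bottom element is a complete lattice, so closure under intersections together with the facts above makes $\mathcal{D}(L)$ a complete lattice in which $\bigvee_i K_i=\bigcap_i K_i$ and $K_1\wedge K_2=K_1\cup K_2$.

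The hard part will be exactly this closure under intersection, i.e.\ showing that an arbitrary intersection $K=\bigcap_{i\in I}K_i$ of compact saturated subsets of $\Sigma L$ is compact. I would first prove \emph{coherence} (a finite intersection of compact saturated sets is compact), which reduces the general case to the filtered intersection $K=\bigcap_{F}K_F$ over the finite subfamilies $K_F=\bigcap_{i\in F}K_i$. For the filtered case I expect to argue by contradiction from a directed open cover $\{U_\lambda\}$ with $K\subseteq\bigcup_\lambda U_\lambda$ but no single $U_\lambda\supseteq K$: choosing $x_\lambda\in K\setminus U_\lambda$ and forming $b=\bigvee_\mu\bigwedge_{\lambda\ge\mu}x_\lambda$ (which exists since $L$ is complete), one checks that every Scott-open neighbourhood of $b$ eventually contains the net $\{x_\lambda\}$, so $b$ is a limit of the net and $b\in U_{\lambda_0}$ would force some $x_\lambda\in U_\lambda$, a contradiction. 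The delicate point, and where the completeness of $L$ must be used in an essential way, is to guarantee that the limit $b$ (or a suitably chosen cluster point built from meets and suprema in $L$) actually lands in $K$; this is the single step I would expect to absorb most of the technical effort, and it is precisely where the special arithmetic of a complete lattice, rather than mere compactness of each $K_i$, is needed.

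Once $\mathcal{D}(L)$ is known to be a complete lattice with $\wedge=\cup$ and $\bigvee=\bigcap$, the infinite distributive law is essentially free. For $K\in\mathcal{D}(L)$ and $\{K_i\}_{i\in I}\subseteq\mathcal{D}(L)$, the identity $K\wedge\bigvee_i K_i=\bigvee_i(K\wedge K_i)$ translates into $K\cup\bigcap_i K_i=\bigcap_i(K\cup K_i)$, which is just the distributivity of union over intersection for subsets of $L$; both sides lie in $\mathcal{D}(L)$ by the closure properties already established. This verifies the frame law and completes the proof that $\mathcal{D}(L)$ is a complete Heyting algebra.
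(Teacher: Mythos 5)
Your skeleton---establish that $\mathcal{D}(L)$ is a complete lattice in which $\bigvee_{i}K_i=\bigcap_{i}K_i$ and $K_1\wedge K_2=K_1\cup K_2$, then get the frame law for free from distributivity of union over intersection---is exactly the paper's, and your structural observations are all correct: $L$ is the bottom, $\{1_L\}$ is the top, every intersection of members of $\mathcal{D}(L)$ is nonempty, binary meets are unions, and arbitrary intersections reduce to filtered intersections of finite ones.

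However, the proof has a genuine hole precisely at the step you flagged, and it is not one that the proposed liminf device can fill. Two separate facts are needed: (a) coherence of $\Sigma L$ (binary intersections of compact saturated sets are compact), which you announce you ``would first prove'' but never do---for Scott spaces of complete lattices this is not a routine exercise but the main result of the note of Jia and Jung \cite{jia-Jung-2016} cited in the paper; and (b) compactness of filtered intersections, which is what well-filteredness of $\Sigma L$ provides, the main theorem of Xi and Lawson \cite{Xi-Lawson-2017}. For (b), your net argument collapses at the decisive point: you would need the cluster point $b=\bigvee_{\mu}\bigwedge_{\lambda\ge\mu}x_\lambda$ to lie in $K$, but compact saturated sets are not closed under liminfs of their nets. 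Already in the four-element complete lattice $\{0,a,b,1\}$ with $a$ and $b$ incomparable, the set $\uparrow\!a\,\cup\uparrow\!b$ is compact saturated, yet the alternating sequence $a,b,a,b,\dots$ has liminf $0$, which lies outside the set; so membership of $b$ in $K$ (or even in a single $K_i$) simply does not follow from $x_\lambda\in K$, and the published proof of well-filteredness of $\Sigma L$ proceeds quite differently, not by a direct liminf computation. The paper sidesteps both difficulties by quoting these two results and then running essentially the assembling argument you give; what your write-up contains is that assembling argument plus an accurate description of where the real work lies, but not the work itself.
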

\begin{proof}
The Scott space $\Sigma L$ of $L$ is well-filtered by Xi and
Lawson\cite{Xi-Lawson-2017}, and is coherent by Jia and Jung
\cite{jia-Jung-2016}. We now show that the poset $\mathcal{D}(L)$ is a complete Heyting algebra.

(i) From that $(L, \sigma(L))$ is well-filtered, it follows that
$\mathcal{D}(L)$ is closed under filtered intersections, thus it is
a dcpo, in which the infimum of a directed subsets $\mathcal{K}$ of
$\mathcal{D}(L)$ is the intersection of $\mathcal{K}$.

(ii) Also $\Sigma L$ is coherent and every member of  $\mathcal{D}(L)$
contains the top element $1_L$  of $L$, so the intersection $K_1{\cap} K_2$ of any two members $K_1, K_2$  of $\mathcal{D}(L)$ is again a member of $\mathcal{D}(L)$, which equals the join
$K_1{\vee} K_2$ of $K_1$ and $K_2$ in $\mathcal{D}(L)$. Also $\mathcal{D}(L)$  has $L$ as the least element, and
$\{1_L\}$ as the top element. It follows that  $\mathcal{D}(L)$ is both a dcpo and a  join semilattice, and has a least element. Therefore $\mathcal{D}(L)$  is a complete lattice. In addition, for any $K_1, K_2\in\mathcal{D}(L)$, the meet $K_1{\wedge} K_2$ of $K_1, K_2$ in $\mathcal{D}(L)$ clearly equals $K_1{\cup} K_2$.

Now for any subfamily $\{K_i: i{\in} I\}{\subseteq} \mathcal{D}(L)$, from (i) and (ii) we can deduce that
$$\bigvee\{K_i: i{\in} I\}=\bigcap\{K_i: i{\in} I\}.$$

Then for any  $K{\in} \mathcal{D}(L)$ and $\{K_{i}:i{\in} I\}{\subseteq} \mathcal{D}(L)$, we have
\[
\begin{array}{rcl}
K{\wedge}{\bigvee_{i\in I}}K_{i}&=&K{\cup}(\bigcap_{i\in I} K_{i})\\
&=&\bigcap_{\i\in I}(K{\cup} K_{i})\\
&=&\bigvee_{i\in I}(K{\wedge} K_{i}).\end{array}\]

\noindent Hence $\mathcal{D}(L)$ is a complete Heyting algebra.
\end{proof}

For any $T_0$ space $(X, \tau)$, let $\xi_X: X\rightarrow
\mathcal{D}(X)$ be the \emph{canonical mapping} given by
$$\xi_X(x)=\uparrow\!x=\{y{\in} X: x{\leq_{\tau}} y\}.$$
It is easy to see that $\xi_X: (X, \leq_{\tau})\lra (\mathcal{D}(D), \supseteq)$ is an order embedding.

To emphasize the codomain, we shall use $\xi_X^{\sigma}$ to denote the corresponding mapping $\xi_X^{\sigma}: (X, \tau)\lra (\mathcal{D}(X), \sigma(\mathcal{D}(X)))$, where
$\xi^{\sigma}_X(x)=\uparrow\!x$ for each $x\in X$.

\begin{thm}\label{main theorem}
Let $X$ be  a $T_0$ space such that

(i) the upper Vietoris topology on $\mathcal{D}(X)$ is contained in $\sigma(\mathcal{D}(X))$ (that is, the upper Vietoris topology is weaker than the Scott topology);

(ii) the mapping $\xi_X^{\sigma}: (X, \tau)\lra (\mathcal{D}(X), \sigma(\mathcal{D}(X)))$ is continuous; and

(iii) $(\mathcal{D}(X), \sigma(\mathcal{D}(X)))$ is sober.

\noindent Then $X$ is sober.
\end{thm}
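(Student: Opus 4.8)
The plan is to take an irreducible closed set of $X$, transport it into $\mathcal{D}(X)$ where sobriety is available, extract a generic point there, and then pull that point back to a generator inside $X$. So let $A$ be an irreducible closed subset of $X$; I must produce a generator, which will be unique since $X$ is $T_0$. Because $\xi_X^{\sigma}$ is continuous by (ii) and continuous images of irreducible sets are irreducible, the image $\xi_X^{\sigma}(A)$ is irreducible in $(\mathcal{D}(X),\sigma(\mathcal{D}(X)))$, hence so is its closure $\overline{\xi_X^{\sigma}(A)}$. By the sobriety hypothesis (iii) there is a unique $K\in\mathcal{D}(X)$ with $\overline{\xi_X^{\sigma}(A)}=\overline{\{K\}}$.

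Next I would decode what this generic point $K$ tells us, using that the specialization order of $\sigma(\mathcal{D}(X))$ is the reverse inclusion $\supseteq$, so that $\overline{\{K\}}=\{H\in\mathcal{D}(X):H\supseteq K\}$. From $\xi_X^{\sigma}(A)\subseteq\overline{\{K\}}$ we get $\uparrow\!a\supseteq K$, i.e. $K\subseteq\,\uparrow\!a$, for every $a\in A$. Reading this pointwise, every $k\in K$ satisfies $a\leq_{\tau}k$ for all $a\in A$, that is $A\subseteq\overline{\{k\}}$ for each $k\in K$. Thus every point of $K$ is an upper bound, in the specialization order, for all of $A$; what remains is to locate such a point that actually lies in $A$.

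The crux is to show $K\cap A\neq\emptyset$, and this is precisely where hypothesis (i) is used. Suppose instead $K\cap A=\emptyset$; then $K\subseteq U:=X\setminus A$, and $U$ is open since $A$ is closed. By (i) the basic upper Vietoris set $\Box\,U$ is Scott open, and it contains $K$; since $K\in\overline{\{K\}}=\overline{\xi_X^{\sigma}(A)}$, the Scott-open set $\Box\,U$ must meet $\xi_X^{\sigma}(A)$, yielding some $a\in A$ with $\uparrow\!a\subseteq U$, hence $a\in U=X\setminus A$, a contradiction. Therefore there is a point $x\in K\cap A$.

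Finally, applying the second paragraph to this $x\in K$ gives $A\subseteq\overline{\{x\}}$, while $x\in A$ and $A$ closed give $\overline{\{x\}}\subseteq A$; hence $A=\overline{\{x\}}$, and uniqueness of the generator follows from $X$ being $T_0$. I expect the nontrivial step to be the passage $K\cap A\neq\emptyset$ of the third paragraph: the earlier steps only show that points of $K$ dominate $A$ in the specialization order, and it is exactly the containment of the upper Vietoris topology in the Scott topology from (i) that forces the generic point $K$ to intersect $A$ and thereby furnishes the required generator within $A$.
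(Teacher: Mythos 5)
Your proof is correct, and it follows the same overall strategy as the paper's --- push the irreducible closed set $A$ forward along $\xi_X^{\sigma}$, use sobriety of $(\mathcal{D}(X),\sigma(\mathcal{D}(X)))$ to obtain a generic point $K$, observe that every point of $K$ is an upper bound of $A$ in the specialization order (the paper's Claim 1), and then use hypothesis (i) to force a contradiction from a basic open set $\Box\,U$ --- but your version is genuinely leaner. The paper inserts an intermediate step (its Claim 2): by a choice-function covering argument, invoking (i) a first time with the Scott-open set $\Box\bigcup\{X\setminus\downarrow\! s(k):k\in K\}$, it shows that $K$ has a least element $s$, so $K=\uparrow\! s$, and only afterwards argues that $s$ lies in the irreducible closed set via $\Box(X\setminus F)$. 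You bypass the least-element claim entirely: your third paragraph applies exactly the paper's Claim 3 argument ($\Box(X\setminus A)$ is Scott open by (i) and must meet $\xi_X^{\sigma}(A)$ because $K$ lies in the closure of that image) to conclude $K\cap A\neq\emptyset$, and then any point $x\in K\cap A$ serves as the generator, since it dominates $A$ and lies in $A$. This eliminates the choice function and one of the two uses of hypothesis (i). Moreover the paper's extra structural conclusion is recovered for free: for $x\in K\cap A$ one has $\uparrow\! x\supseteq K$ (since $\uparrow\! x=\xi_X^{\sigma}(x)$ lies in $\overline{\{K\}}$) and $\uparrow\! x\subseteq K$ (since $K$ is saturated, hence an upper set), so $K=\uparrow\! x$ after all.
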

\begin{proof}
Let $F$ be a nonempty closed irreducible subset of $X$. Then, as
$\xi_{X}^{\sigma}$ is continuous, $\xi_X^{\sigma}(F)$ is an irreducible subset of
$(\mathcal{D}(X), \sigma(\mathcal{D}(X)))$. Therefore, there exists
$K\in \mathcal{D}(X)$ such that
$$cl_{\sigma(\mathcal{D}(X))}(\xi_X^{\sigma}(F))=\downarrow_{\mathcal{D}(X)}K (=\{A{\in} \mathcal{D}(X): K{\subseteq} A\}),$$
where $cl_{\sigma(\mathcal{D}(X))}$ is the closure operator in the Scott space $(\mathcal{D}(X), \sigma(\mathcal{D}(X)))$.

Claim 1. Every element of $K$ is an upper bound of $F$ in the poset $(X, \leq_{\tau})$.

In fact, let $k{\in} K$ and $x{\in} F$. Then $\uparrow\! k{\in} \mathcal{D}(X)$ and $\uparrow\!k {\subseteq} K$. In addition, ${\uparrow}x=\xi_X^{\sigma}(x){\in} {\downarrow_{\mathcal{D}(X)}}K$, so $\uparrow\!x{\supseteq} K$. Hence
$\uparrow\!x{\supseteq} K{\supseteq} \uparrow\!k$, which implies $x{\leq_{\tau}} k$.

Claim 2. $K$ has a least element.

If, on the contrary, for each $k\in K$, there is $s(k){\in} K$ such that $k{\not\leq_{\tau}} s(k)$. Then
$$ K{\subseteq}{ \bigcup}\{X {\setminus}\downarrow\! s(k): k{\in} K\}.$$

\noindent Thus $K{\in} \Box\bigcup\{X {\setminus }\downarrow\! s(k): k{\in}
K\}{\in} \sigma(\mathcal{D}(X))$ (by the assumption (i) in Theorem \ref{main theorem}), implying
$$K{\in} cl_{\sigma(\mathcal{D}(X))}(\xi_X^{\sigma}(F)){\cap} \Box\bigcup\{X \setminus \downarrow\! s(k): k{\in}
K\}.$$

\noindent Hence
$$cl_{\sigma(\mathcal{D}(X))}(\xi_X^{\sigma}(F))\cap \Box\bigcup\{X \setminus \downarrow\! s(k): k{\in}
K\}{\not=}\emptyset.$$

\noindent Therefore $\xi_X^{\sigma}(F)\cap
\Box\bigcup\{X \setminus \downarrow\! s(k): k{\in} K\}{\not=}\emptyset$.
Hence there exists $y{\in} F$ with $\uparrow\!\!y {\subseteq}\bigcup\{X \setminus
\downarrow\! s(k): k{\in} K\}$. It follows that $y{\not\leq_{\tau}}s(k)$ for
some $s(k){\in} K$. But this contradicts Claim 1 (every element of $K$
is an upper bound of $F$).

Therefore $K$ has a least element, say $s$. Then $K=\uparrow\! s$.

Claim 3. $F=cl_{X}(\{s\})$.
As $s$ is an upper bound of $F$ and $F$ is closed, we only need to confirm that $s{\in} F$.

Assume, on the contrary, that $s{\not\in} F$.
Then $K{\subseteq} X
{\setminus} F$, so
$$K{\in} cl_{\sigma(\mathcal{D}(X))}(\xi_X^{\sigma}(F))\cap \Box
(X{\setminus} F)$$
and $\Box(X{\setminus} F){\in}\sigma(\mathcal{D}(X)).$

Therefore $\xi_X^{\sigma}(F)\cap \Box (X {\setminus} F)\not=\emptyset$, which is
impossible.

Hence $s{\in} F$, thus $F=\downarrow\! s=cl_X(\{s\})$.

All these together show that $(X, \tau)$ is sober.
\end{proof}

\begin{rmk}\label{j_X continuous}  (1) For every  $T_0$ space $X$, the mapping $\xi_{X}: X\lra \mathcal {D}(X)$ (the upper space of $X$)
is a topological embedding (cf. \cite{Klause-Heckmann}).

(2) For any poset $P$, the mapping
$$\xi_{P}^{\sigma}: (P, \sigma(P))\lra (\mathcal{D}(P), \sigma(\mathcal{D}(P)))$$
 is continuous, i.e., it preserves all existing suprema.

(3) Every well-filtered space is a d-space.

(4) A $T_0$ space $X$ is well-filtered iff $\mathcal {D}(X)$ is a dcpo and the upper
Vietoris topology on $\mathcal{D}(X)$ is contained in
$\sigma(\mathcal{D}(D))$ (equivalently, the upper space $\mathcal{D}(X)$ is a d-space) (see \cite{Heckmann}\cite{xi-zhao-MSCS-well-filtered}). In general, the well-filteredness of $X$ is stronger than the condition that the upper
Vietoris topology on $\mathcal{D}(X)$ is contained in
$\sigma(\mathcal{D}(D))$.

\indent For example, consider the poset $\mathbb{N}$ of natural numbers with the usual order. Then every element in $\mathbb{N}$ is compact and so $\mathbb{N}$ is an algebraic poset. Hence $\Sigma \mathbb{N}$ ($\sigma (\mathbb{N})$ equals the \emph{Alexandroff topology} on $\mathbb{N}$) is locally compact and $\mathcal {D}(\mathbb{N})=\{{\uparrow} n : n\in \mathbb{N}\}$, which is isomorphic to $\mathbb{N}$. Therefore $\mathcal {D}(\mathbb{N})$ is not a dcpo. Now we have that the upper
Vietoris topology on $\mathcal{D}(\mathbb{N})$ equals the Scott topology
$\sigma(\mathcal{D}(\mathbb{N}))$ (and also equals the Alexandroff topology on $\mathcal{D}(\mathbb{N})$). But $\Sigma \mathbb{N}$ is not well-filtered.
\end{rmk}

\begin{exam}\label{well-filtered not sober}
Let $X$ be any non-countable set and $\tau$ be the co-countable topology on $X$.
Then $(X, \tau)$ is a $T_1$ space. Clearly, the nonempty compact (saturated) subsets of $(X, \tau)$ are exactly the nonempty finite subsets of $X$, that is,
$\mathcal{D}(X)=\{F : F$ is a nonempty finite subset of $X\}$. Every directed subset $\mathcal {E}$ of $
\mathcal{D}(X)$ has a largest element (which is the intersection of
$\mathcal{E}$), so $(X, \tau)$ is well-filtered but non-sober ($X$ is an
irreducible closed set but not the closure of any singleton set).
Clearly $\mathcal{D}(X)$ is a dcpo and every element in $\mathcal{D}(X)$ is
compact. Hence $\mathcal{D}(X)$ is an algebraic domain and $\sigma
(\mathcal{D}(X))$ (which equals the Alexandroff topology on $\sigma
(\mathcal{D}(X))$) is sober. For $(X, \tau)$, the conditions (i) and
(iii) in Theorem \ref{main theorem} are satisfied, but the
assumption (ii) does not hold. In this case, the sobriety of
$(\mathcal{D}(X), \sigma(\mathcal{D}(X)))$ does not imply the
sobriety of $(X, \tau)$.
\end{exam}

By Remark \ref{j_X continuous}\! (4), Theorem \ref{main theorem} can be restated as the following one.

\begin{thm}\label{the main theorem}
Let $(X, \tau)$ be  a $T_0$ space such that

(i) $X$ is well-filtered;

(ii) the mapping $\xi_X^{\sigma}: (X, \tau)\lra (\mathcal{D}(X), \sigma(\mathcal{D}(X)))$ is continuous; and

(iii) $(\mathcal{D}(X), \sigma(\mathcal{D}(X)))$ is sober.

\noindent Then $X$ is sober.
\end{thm}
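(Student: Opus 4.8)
The plan is to obtain this statement directly from Theorem \ref{main theorem}, the point being that it is a reformulation in which only hypothesis (i) has been rephrased. First I would invoke Remark \ref{j_X continuous}(4): since $X$ is well-filtered, the upper space $\mathcal{D}(X)$ is a d-space, which by the definition of a d-space means both that $(\mathcal{D}(X), \supseteq)$ is a dcpo and that the upper Vietoris topology on $\mathcal{D}(X)$ is contained in $\sigma(\mathcal{D}(X))$. In particular, hypothesis (i) of Theorem \ref{main theorem}---that the upper Vietoris topology is weaker than the Scott topology on $\mathcal{D}(X)$---is now available.

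Next I would note that the remaining two hypotheses require no translation: the continuity of $\xi_X^{\sigma}\colon (X,\tau)\lra (\mathcal{D}(X),\sigma(\mathcal{D}(X)))$ in (ii) and the sobriety of $(\mathcal{D}(X),\sigma(\mathcal{D}(X)))$ in (iii) are word-for-word the hypotheses (ii) and (iii) of Theorem \ref{main theorem}. Hence all three hypotheses of Theorem \ref{main theorem} hold, and applying that theorem yields that $X$ is sober, as desired.

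Because the argument is purely a matter of substituting one equivalent condition for another, I do not expect a genuine obstacle; the entire substance is carried by Theorem \ref{main theorem} itself. The only step meriting care is the equivalence recorded in Remark \ref{j_X continuous}(4)---that well-filteredness of $X$ is equivalent to $\mathcal{D}(X)$ being a d-space---which is exactly what licenses replacing the topological condition (i) of Theorem \ref{main theorem} by the more familiar well-filteredness hypothesis stated here. This equivalence is the borrowed input, attributable to \cite{Heckmann} and \cite{xi-zhao-MSCS-well-filtered}, and once it is in hand the reformulation is immediate.
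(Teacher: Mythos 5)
Your proposal is correct and matches the paper's own derivation exactly: the paper obtains this theorem from Theorem \ref{main theorem} by the single observation (Remark \ref{j_X continuous}(4)) that well-filteredness of $X$ gives that the upper space $\mathcal{D}(X)$ is a d-space, hence in particular that the upper Vietoris topology is contained in $\sigma(\mathcal{D}(X))$, while hypotheses (ii) and (iii) carry over verbatim. Nothing further is needed.
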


By Theorem \ref{the main theorem} and Remark \ref{j_X continuous}, we
deduce the following.

\begin{cor}\label{poset version} For a dcpo $P$, if $(P, \sigma (P))$ is well-filtered and $(\mathcal{D}(P)$, $\sigma(\mathcal{D}(P)))$ is sober
(equivalently, the upper space $\mathcal{D}(P)$ is a d-space and $(\mathcal{D}(P), \sigma(\mathcal{D}(P)))$ is sober), then $(P, \sigma(P))$ is sober.
\end{cor}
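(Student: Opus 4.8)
The plan is to derive Corollary~\ref{poset version} as a direct specialization of Theorem~\ref{the main theorem} to the case where the $T_0$ space is the Scott space $(P,\sigma(P))$ of a dcpo $P$. I would verify that the three hypotheses of Theorem~\ref{the main theorem} hold for $X=(P,\sigma(P))$ under the assumptions of the corollary, and then simply invoke that theorem to conclude sobriety.

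First I would address hypothesis~(i): it is assumed outright, since the corollary posits that $(P,\sigma(P))$ is well-filtered. Next, for hypothesis~(ii), I would note that the required continuity of $\xi_X^{\sigma}\colon (P,\sigma(P))\lra(\mathcal{D}(P),\sigma(\mathcal{D}(P)))$ is precisely the content of Remark~\ref{j_X continuous}(2), which asserts that for any poset $P$ the canonical mapping $\xi_P^{\sigma}$ is continuous (equivalently, preserves all existing suprema). Thus (ii) comes for free from the earlier remark. Finally, hypothesis~(iii) is again assumed directly in the statement of the corollary, namely that $(\mathcal{D}(P),\sigma(\mathcal{D}(P)))$ is sober.

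With all three hypotheses of Theorem~\ref{the main theorem} in place, I would conclude that $X=(P,\sigma(P))$ is sober, which is exactly the desired conclusion. The parenthetical equivalence in the corollary—rephrasing ``$(P,\sigma(P))$ is well-filtered'' as ``the upper space $\mathcal{D}(P)$ is a d-space''—is justified by Remark~\ref{j_X continuous}(4), so I would mention that the two formulations of hypothesis~(i) coincide for Scott spaces and hence the statement is unambiguous.

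I do not anticipate any genuine obstacle here, since the corollary is essentially a repackaging of Theorem~\ref{the main theorem} with the ``continuity'' hypothesis discharged automatically by Remark~\ref{j_X continuous}(2). The only point requiring a moment's care is confirming that $\xi_P^{\sigma}$ is continuous for \emph{every} poset $P$ (not merely for well-behaved ones), but this is exactly what Remark~\ref{j_X continuous}(2) guarantees, so the verification is immediate and the proof reduces to citing the two earlier results.
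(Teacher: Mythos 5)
Your proposal is correct and is exactly the paper's own derivation: the paper obtains this corollary by invoking Theorem~\ref{the main theorem} with hypotheses (i) and (iii) given by assumption, hypothesis (ii) supplied by Remark~\ref{j_X continuous}(2), and the parenthetical equivalence justified by Remark~\ref{j_X continuous}(4).
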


By Xi and Lawson \cite{Xi-Lawson-2017}, for any complete lattice
$L$, $(L, \sigma(L))$ is well-filtered, hence the upper Vietoris topology on
$\mathcal{D}(L)$ is contained in $\sigma(L)$. Now applying Corollary
\ref{poset version}, we obtain the following.
\begin{thm}\label{sobriety of k(L)}
For any complete lattice $L$, if $(L, \sigma(L))$ is non-sober, then $(\mathcal{D}(L), \sigma(\mathcal{D}(L)))$ is non-sober.
\end{thm}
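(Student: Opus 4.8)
The plan is to derive Theorem \ref{sobriety of k(L)} by contraposition from Corollary \ref{poset version}. Suppose, toward a contradiction, that $(\mathcal{D}(L), \sigma(\mathcal{D}(L)))$ is sober; I will show this forces $(L, \sigma(L))$ to be sober, contradicting the hypothesis that it is non-sober. To invoke Corollary \ref{poset version} I need to verify its two standing hypotheses for the dcpo $P = L$: first, that $(L, \sigma(L))$ is well-filtered, and second, that $(\mathcal{D}(L), \sigma(\mathcal{D}(L)))$ is sober. The second is exactly the assumption for contradiction, so the only genuine input is the first.

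The well-filteredness of $(L, \sigma(L))$ is supplied directly by the result of Xi and Lawson \cite{Xi-Lawson-2017}, which states that the Scott space of \emph{any} complete lattice is well-filtered; this is the same fact already invoked in the proof of Lemma \ref{k(L)is a distributive lattice}. By Remark \ref{j_X continuous}(4), well-filteredness of $L$ guarantees both that $\mathcal{D}(L)$ is a dcpo and that the upper Vietoris topology on $\mathcal{D}(L)$ is contained in $\sigma(\mathcal{D}(L))$, so all the equivalent formulations of the hypothesis in Corollary \ref{poset version} are met. I should also note, as the excerpt does in the paragraph preceding the theorem, that continuity of $\xi_L^\sigma$ (condition (ii) of Theorem \ref{the main theorem}) comes for free from Remark \ref{j_X continuous}(2), since the canonical map is continuous for any poset; thus nothing further needs checking on that front.

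With these pieces in place the argument is essentially a one-line application: $L$ is a dcpo (being a complete lattice), $(L, \sigma(L))$ is well-filtered by Xi and Lawson, and $(\mathcal{D}(L), \sigma(\mathcal{D}(L)))$ is sober by the contradiction hypothesis, so Corollary \ref{poset version} yields that $(L, \sigma(L))$ is sober. This contradicts the assumption that $(L, \sigma(L))$ is non-sober, and therefore $(\mathcal{D}(L), \sigma(\mathcal{D}(L)))$ must be non-sober.

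I do not expect any real obstacle here, since the theorem is a direct corollary and all the work has been done upstream; the substantive content lies in Theorem \ref{main theorem} and its reformulation. The only point requiring minor care is making sure the contrapositive is set up cleanly and that the cited well-filteredness result genuinely applies to arbitrary complete lattices (which it does). If I were to look for a subtle gap, it would be in confirming that ``the upper Vietoris topology on $\mathcal{D}(L)$ is contained in $\sigma(L)$'' — as phrased in the sentence before the theorem — is really meant to read ``contained in $\sigma(\mathcal{D}(L))$'', matching condition (i) of Theorem \ref{main theorem}; this appears to be a harmless typographical slip rather than a mathematical issue.
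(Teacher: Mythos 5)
Your proposal is correct and matches the paper's own argument: the paper likewise obtains this theorem as the contrapositive of Corollary \ref{poset version}, with well-filteredness of $(L,\sigma(L))$ supplied by the Xi--Lawson result and the remaining hypotheses handled by Remark \ref{j_X continuous}. Your observation that ``contained in $\sigma(L)$'' in the sentence preceding the theorem should read ``contained in $\sigma(\mathcal{D}(L))$'' is also right; it is a typographical slip, not a gap.
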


Now we are ready to answer Jung's problem mentioned in the introduction.
\begin{exam}\label{Jung OK}
In \cite{isbell}, Isbell constructed a complete lattice whose
Scott topology is non-sober, thus answered a question posed by
Johnstone
 in \cite{johnstone-81}. The Isbell's complete lattice is not
distributive. In one of his recent talk in Singapore, Achim Jung
asked whether there is a distributive complete lattice whose Scott
topology is non-sober. We now can give a positive answer to this
problem. Take $M$ be the complete lattice  constructed by Isbell and
let $L=\mathcal{D}(M)$. Then by Lemma \ref{k(L)is a distributive
lattice}, $L$ is a complete Heyting algebra. Since the Scott space
of $M$ is non-sober, by Theorem \ref{sobriety of k(L)}, the Scott
space of  $L$ is non-sober. Hence $L$ is a complete Heyting algebra
whose Scott space is non-sober.
\end{exam}

\begin{rmk}
For any $T_0$ space $(X, \tau)$, the poset $(\mathcal{D}(X), \supseteq)$ is a meet-semilattice, where the meet of $K_1, K_2\in \mathcal{D}(X)$ equals $K_1\cup K_2$.
Then clearly every principle filter $\uparrow\!x=\{y\in X: x\le_{\tau} y\}$ is a prime element  of $\mathcal{D}(X)$. In addition, for any $K\in \mathcal{D}(X)$,
$$K=\bigwedge\{\uparrow\!x: x\in K\},$$
 showing that every element of $\mathcal{D}(X)$ can be expressed as a meet of prime elements. Hence by Lemma \ref{k(L)is a distributive lattice}, $(\mathcal{D}(L), \supseteq)$ is actually a spacial frame for any complete lattice $L$ (see \cite{johnstone-93} for more about spatial frames).  Thus the non-sober complete Heyting algebra $L$ obtained in Example \ref{Jung OK} is also a spacial frame.
\end{rmk}

\section{Well-filteredness of upper spaces}
In this section, the symbol $\mathcal{D}(X)$ will denote the upper
space of  topological space $X$.

In \cite{Klause-Heckmann}, it is proved that a space $X$ is sober iff the upper space $\mathcal{D}(X)$ is sober.
In \cite{xi-zhao-MSCS-well-filtered}, it is proved that a $T_0$ space $X$ is well-filtered if and only if its upper space is a d-space.
As remarked in \cite{xi-zhao-MSCS-well-filtered}, it is still not known the answer to the following problem: Must the upper space $\mathcal{D}(X)$ be well-filtered if $X$ is well-filtered?

We now give a positive answer to the above problem.

\begin{lem}\label{rudin's lemma} \emph{(\cite{Klause-Heckmann})}
 Let $X$ be a topological space and $\mathcal{A}$ an
irreducible subset of the upper space $\mathcal{D}(X)$. Then every  closed set $C {\subseteq} X$  that
meets all members of $\mathcal{A}$  contains an irreducible closed subset $A$ that still meets all
members of $\mathcal{A}$.
\end{lem}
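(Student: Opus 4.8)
The plan is to obtain the desired set $A$ as a minimal element, with respect to inclusion, of the family of closed sets inside $C$ that meet every member of $\mathcal{A}$, and then to show that minimality forces irreducibility. Concretely, set $\mathcal{C}=\{D\subseteq C: D \text{ is closed in } X \text{ and } D\cap K\neq\emptyset \text{ for every } K\in\mathcal{A}\}$. By hypothesis $C\in\mathcal{C}$, so $\mathcal{C}\neq\emptyset$.

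First I would verify the hypothesis of Zorn's Lemma for $(\mathcal{C},\supseteq)$, namely that the intersection of any chain in $\mathcal{C}$ again lies in $\mathcal{C}$; this is the only place where the compactness of the members of $\mathcal{A}$ is used, and I expect it to be the crux of the argument. Given a chain $\{D_i\}_{i\in I}\subseteq\mathcal{C}$ and a fixed $K\in\mathcal{A}$, the sets $D_i\cap K$ are nonempty closed subsets of $K$ totally ordered by inclusion, hence the family $\{D_i\cap K\}$ has the finite intersection property; since $K$ is compact, $(\bigcap_i D_i)\cap K=\bigcap_i(D_i\cap K)\neq\emptyset$. As $K$ was arbitrary, $\bigcap_i D_i\in\mathcal{C}$, so every chain has a lower bound and Zorn's Lemma yields a minimal element $A\in\mathcal{C}$. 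Since $A\subseteq C$ is closed and meets every member of $\mathcal{A}$, it remains only to check that $A$ is irreducible.

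For the irreducibility step I would argue by contradiction, and here the irreducibility of $\mathcal{A}$ in $\mathcal{D}(X)$ enters. Suppose $A$ were reducible, so $A\subseteq F_1\cup F_2$ with $F_1,F_2$ closed in $X$, $A\not\subseteq F_1$, and $A\not\subseteq F_2$; put $A_1=A\cap F_1$ and $A_2=A\cap F_2$, which are closed subsets of $C$ properly contained in $A$ with $A=A_1\cup A_2$. By minimality of $A$, neither $A_1$ nor $A_2$ belongs to $\mathcal{C}$, so there are $K_1,K_2\in\mathcal{A}$ with $K_1\cap A_1=\emptyset$ and $K_2\cap A_2=\emptyset$. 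Then $K_1\subseteq X\setminus A_1$ and $K_2\subseteq X\setminus A_2$, i.e. $K_1\in\Box(X\setminus A_1)$ and $K_2\in\Box(X\setminus A_2)$, so the basic upper Vietoris open sets $\Box(X\setminus A_1)$ and $\Box(X\setminus A_2)$ each meet $\mathcal{A}$.

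Finally I would invoke the standard characterization of irreducibility: since $\mathcal{A}$ is irreducible, it must also meet the intersection $\Box(X\setminus A_1)\cap\Box(X\setminus A_2)=\Box\big((X\setminus A_1)\cap(X\setminus A_2)\big)=\Box(X\setminus A)$. Thus some $K\in\mathcal{A}$ satisfies $K\subseteq X\setminus A$, i.e. $K\cap A=\emptyset$, contradicting $A\in\mathcal{C}$. Hence $A$ is irreducible, which completes the proof. The only genuinely delicate point is the compactness argument in the Zorn step; the irreducibility deduction is then a clean translation of the statement "$K_i$ avoids $A_i$" into membership in the basic open set $\Box(X\setminus A_i)$.
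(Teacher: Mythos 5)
Your proof is correct and coincides with the argument in the paper's cited source: the paper does not prove this lemma itself but attributes it to Lemma 3.1 of \cite{Klause-Heckmann}, whose proof is exactly your combination of Zorn's Lemma (using compactness of each $K\in\mathcal{A}$ to intersect chains) with the translation of ``$K_i$ misses $A_i$'' into membership in $\Box(X\setminus A_i)$ and the irreducibility of $\mathcal{A}$. Note moreover that your construction yields a \emph{minimal} closed set meeting all members of $\mathcal{A}$, which is precisely the strengthening the paper records in Remark~\ref{Rudin's lemma} and later uses in the well-filteredness theorem.
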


\begin{rmk}\label{Rudin's lemma}  The irreducible closed set $A$ in Lemma \ref{rudin's lemma} can
be take as a minimal one: if $A'$ is a closed set, $A'{\subseteq} A$
and meets all members of $\mathcal{A}$ , then $A'=A$ (see the proof of Lemma 3.1 in \cite{Klause-Heckmann}).
\end{rmk}

The following result can be verified straightforwardly (see e.g. [{\bf 9}, page 128] or the proof of Lemma 3.1 in \cite{jia-Jung-2016}).

\begin{lem}\label{union in DX}  If $\mathcal{K}{\subseteq} \mathcal{D}(X)$ is a nonempty compact
subset of $\mathcal{D}(X)$, then $\bigcup\mathcal{K}{\in}
\mathcal{D}(X)$.
\end{lem}

\begin{thm}
A topological space $X$ is well-filtered iff its upper space $\mathcal{D}(X)$
is well-filtered.
\end{thm}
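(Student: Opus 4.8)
The plan is to prove both directions of the equivalence, treating them quite asymmetrically. For the easier direction, suppose the upper space $\mathcal{D}(X)$ is well-filtered; I want to conclude that $X$ itself is well-filtered. By Remark \ref{j_X continuous}(4), it suffices to verify the three hypotheses of Theorem \ref{the main theorem} would reverse-apply, but more directly I would exploit the topological embedding $\xi_X: X \lra \mathcal{D}(X)$ from Remark \ref{j_X continuous}(1). Since well-filteredness is inherited by suitable subspaces and $X$ embeds as the subspace $\{\uparrow\!x : x\in X\}$ of $\mathcal{D}(X)$, a direct filtered-family argument should transfer: given an open $U\subseteq X$ and a filtered family $\mathcal{F}$ of compact saturated subsets of $X$ with $\bigcap\mathcal{F}\subseteq U$, I would push $\mathcal{F}$ forward into $\mathcal{D}(X)$ and use that the $\xi_X$-images interact correctly with the upper Vietoris opens $\Box U$. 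This direction I expect to be routine.

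The substantive direction is the forward one: assuming $X$ is well-filtered, show $\mathcal{D}(X)$ is well-filtered. Here the natural strategy is to unwind the definition: take an open subset $\mathcal{U}$ of $\mathcal{D}(X)$ and a filtered family $\{\mathcal{K}_j : j\in J\}$ of compact saturated subsets of $\mathcal{D}(X)$ with $\bigcap_j \mathcal{K}_j \subseteq \mathcal{U}$, and I must produce a single $j$ with $\mathcal{K}_j\subseteq\mathcal{U}$. Arguing by contradiction, suppose each $\mathcal{K}_j \not\subseteq \mathcal{U}$, so each set $\mathcal{K}_j \setminus \mathcal{U}$ is nonempty; these are compact (closed subsets of compact sets) and form a filtered family of nonempty compact subsets of the \emph{closed} set $\mathcal{D}(X)\setminus\mathcal{U}$. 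The key move is to convert this downstairs into $X$: for each $\mathcal{K}_j$, the union $\bigcup\mathcal{K}_j$ is a member of $\mathcal{D}(X)$ by Lemma \ref{union in DX}, and these unions should inherit compactness and filteredness in $X$. I would then run the well-filteredness of $X$ on the family $\{\bigcup(\mathcal{K}_j\setminus\mathcal{U})\}$ against an appropriate open set of $X$ built from $\mathcal{U}$.

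The technical heart — and the step I expect to be the main obstacle — is engineering the correct irreducible/compact object in $\mathcal{D}(X)$ and relating its intersection behaviour to that of $X$. This is exactly where Lemma \ref{rudin's lemma} (Rudin's Lemma, in the upper-space formulation of Heckmann--Keimel) together with the minimality strengthening of Remark \ref{Rudin's lemma} enters. Concretely, I would consider the family of closed sets of $X$ that meet every member of some irreducible $\mathcal{A}\subseteq\mathcal{D}(X)$ extracted from the $\mathcal{K}_j\setminus\mathcal{U}$, invoke Rudin's Lemma to find a minimal irreducible closed $A\subseteq X$ still meeting all of them, and use the well-filteredness (hence d-space property, via Remark \ref{j_X continuous}(3)) of $X$ to force $A$ to be the closure of a point or to contradict $\bigcap_j\mathcal{K}_j\subseteq\mathcal{U}$. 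The delicate bookkeeping is ensuring that ``meets every member of $\mathcal{A}$'' upstairs corresponds precisely to a nonempty intersection downstairs, so that the compact saturated witness in $X$ produced by well-filteredness lifts back to a point of $\bigcap_j\mathcal{K}_j$ lying outside $\mathcal{U}$, yielding the desired contradiction.
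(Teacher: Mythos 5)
Your converse direction is essentially fine, but only via the route you mention first: $\mathcal{D}(X)$ well-filtered implies $\mathcal{D}(X)$ is a d-space (Remark \ref{j_X continuous}(3)), which by Remark \ref{j_X continuous}(4) gives that $X$ is well-filtered; that is also all the paper needs (the appeal to ``well-filteredness is inherited by suitable subspaces'' is not something you can lean on). The genuine gap is in the forward direction, and it sits exactly where you flagged ``the main obstacle'': you apply Rudin's Lemma at the wrong level. You extract a filtered family in $\mathcal{D}(X)$ from the sets $\mathcal{K}_j\setminus\mathcal{U}$ (say $G_j=\bigcup(\mathcal{K}_j\setminus\mathcal{U})$, legitimate by Lemma \ref{union in DX}) and ask for a minimal irreducible closed $A\subseteq X$. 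The paper instead applies Lemma \ref{rudin's lemma} with the ambient space taken to be $\mathcal{D}(X)$ itself: $\{\mathcal{K}_j\}$ is an irreducible subset of $\mathcal{D}(\mathcal{D}(X))$, the closed set is $\mathcal{D}(X)\setminus\mathcal{U}$, and Remark \ref{Rudin's lemma} yields a \emph{minimal irreducible closed subset $\mathcal{C}$ of $\mathcal{D}(X)$ contained in $\mathcal{D}(X)\setminus\mathcal{U}$} meeting every $\mathcal{K}_j$. That upstairs object is what carries the constraint ``stays outside $\mathcal{U}$'', and its minimality against the closed sets $\diamond(\downarrow\! e(G))$ is precisely what proves that $K:=\bigcap_j\bigcup(\mathcal{K}_j\cap\mathcal{C})$ lies in $\uparrow_{\mathcal{D}(X)}(\mathcal{K}_j\cap\mathcal{C})\subseteq\mathcal{K}_j$ for every $j$, while well-filteredness of $X$, applied to one $U_i$ at a time, gives $K\notin\mathcal{U}$. (Note also that the ``appropriate open set of $X$ built from $\mathcal{U}$'' you invoke does not exist: $\mathcal{U}=\bigcup_i\Box U_i$ is an arbitrary union, and only the individual $U_i$ can be used.)

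The downstairs minimal set cannot do this job; indeed, your plan, completed in the natural way, terminates without any contradiction. Your assertion that well-filteredness forces $A$ to be a point closure is actually correct (apply well-filteredness to the filtered family $\{\uparrow\!(G_j\cap A)\}$ and the open set $X\setminus A$, then use minimality of $A$), so $A=\cl(\{x\})$ for some $x$, and since each $G_j$ is saturated, hence an upper set, $A\cap G_j\neq\emptyset$ forces $x\in G_j$ for all $j$. Now pick $Q_j\in\mathcal{K}_j\setminus\mathcal{U}$ with $x\in Q_j$; then $\uparrow\! x\subseteq Q_j$, and since $\mathcal{K}_j$ is saturated in $\mathcal{D}(X)$ (i.e., downward closed under $\subseteq$ among compact saturated sets), you get $\uparrow\! x\in\mathcal{K}_j$ for every $j$, hence $\uparrow\! x\in\bigcap_j\mathcal{K}_j\subseteq\mathcal{U}$. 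This is perfectly consistent, not contradictory: $\uparrow\! x$ may well lie in some $\Box U_i$ even though every $Q_j\supseteq\uparrow\! x$ lies outside $\mathcal{U}$ --- it is exactly the largeness of the $Q_j$ that keeps them out of $\mathcal{U}$, and that information is destroyed when you pass down to points of the union and re-saturate. So the essential idea of the paper's proof --- performing the Rudin/minimality step \emph{upstairs}, inside the closed set $\mathcal{D}(X)\setminus\mathcal{U}$ --- is absent from your outline, and without it the contradiction you aim for cannot be reached.
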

\begin{proof}
We only need to show that if $X$ is well-filtered, then so is $\mathcal{D}(X)$.
Let $\{\mathcal{K}_t: t{\in} T\}$ be a filtered family of saturated
compact subsets of $\mathcal{D}(X)$, $\mathcal{U}=\bigcup\{\Box U_i:
i{\in} I\}$ an open set of $\mathcal{D}(X)$ such that
$$\bigcap\{\mathcal{K}_t: t{\in} T\}{\subseteq} \mathcal{U}.$$
Suppose that $\mathcal{K}_t{\not\subseteq} \mathcal{U}$ for all $t$,
that is, $\mathcal{K}_t{\cap} (\mathcal{D}(X){\setminus}
\mathcal{U}){\not=}\emptyset.$ Then as $\{\mathcal{K}_t: t{\in} T\}$ is
an irreducible subset of the space $\mathcal{D}(X)$, by Lemma
\ref{rudin's lemma} and Remark \ref{Rudin's lemma}, there is a
minimal closed irreducible subset $\mathcal{C}{\subseteq}
\mathcal{D}(X){\setminus}\mathcal{U}$ that meets  every $\mathcal{K}_t
(t{\in} T)$.

Let $\mathcal{C}=\bigcap\{\diamond C_j: j{\in} J\}$, where each $C_j$
is a closed subset of $X$ and $\diamond C_j=\{F{\in} \mathcal{D}(X):
F{\cap} C_j{\not=}\emptyset\}$. For each $t{\in} T$, let
$K_t=\bigcup(\mathcal{K}_t{\cap} \mathcal{C})$.  As $\mathcal{K}_t{\cap}
\mathcal{C}$ is nonempty and compact in $\mathcal{D}(X)$, by Lemma
 \ref{union in DX} we have that $K_t{\in} \mathcal{D}(X)$. Also
$\{K_t: t{\in} T\}$ is a filtered family of member of
$\mathcal{D}(X)$, thus $K=\bigcap\{K_t: t{\in} T\}$ is a member of
$\mathcal{C}(X)$  because $X$ is well-filtered.

Claim 1. $K{\not\in} \mathcal{U}$. \\
\indent Assume, on the contrary, that $K{\in} \mathcal{U}$. Then
$K=\bigcap\{K_t: t{\in} T\}\subseteq  U_i$ for some $i{\in} I$. Then, as
$X$ is well-filtered, $K_t{\subseteq} U_i$ holds for some $t{\in} T$. Then
$\mathcal{K}_t{\cap} \mathcal{C}{\subseteq} \Box U_i{\subseteq}
\mathcal{U}$, contradicting $\mathcal{C}{\subseteq}
\mathcal{D}(X){\setminus}\mathcal{U}$.

Claim 2. $K{\in} \bigcap\{\uparrow_{\mathcal{D}(X)}(\mathcal{K}_t{\cap} \mathcal{C}): t{\in} T\}.$

Suppose, on the contrary,  that $K{\not\in}
\bigcap\{\uparrow_{\mathcal{D}(X)}(\mathcal{K}_t{\cap} \mathcal{C}):
t{\in} T\}.$ Then there is $t_0{\in} T$ such that $K{\not\in}
\uparrow_{\mathcal{D}(X)}(\mathcal{K}_{t_0}{\cap} \mathcal{C})$. Thus,
for any $G{\in} \mathcal{K}_{t_{0}}{\cap} \mathcal{C}$, there exists
$e(G)\in K{\setminus} G$. Then $G{\cap} \downarrow\!e(G)=\emptyset$ (Note that $G$
is a saturated compact set). Now for any $G{\in}
\mathcal{K}_{t_{0}}\cap \mathcal{C}$ and any $t{\in} T$, since
$e(G){\in} K$ (so $e(G){\in} K_t$), we have $e(G){\in} \bigcup(\mathcal{K}_t{\cap}
\mathcal{C})$. Thus there exists $H_t{\in} \mathcal{K}_t{\cap}
\mathcal{C}$ such that $e(G){\in} H_t$, implying
$$ H_t{\in} \mathcal{K}_t{\cap}\mathcal{C}{\cap} \diamond({\downarrow} e(G)).$$
It follows that
$$\mathcal{K}_t{\cap}\mathcal{C}{\cap} \diamond(\downarrow\! e(G)){\not=}\emptyset, \mbox{ for all } t{\in }T.$$

By the minimality of $\mathcal{C}$, we have $\mathcal{C}{\cap }\diamond(\downarrow\!\! e(G))=\mathcal{C}$, which implies $\mathcal{C}{\subseteq} \diamond(\downarrow\! e(G))$.

Therefore $\mathcal{C}{\subseteq} \bigcap\{\diamond(\downarrow\!\! e(G)): G{\in} \mathcal{K}_{t_0}{\cap} \mathcal{C}\}$. Note that for any
$G{\in} \mathcal{K}_{t_0}{\cap} \mathcal{C}, G{\not\in} \diamond(\downarrow\! e(G))$.  Hence
$$\emptyset \neq\mathcal{K}_{t_0}{\cap} \mathcal{C}=\mathcal{K}_{t_0}{\cap} \mathcal{C}\cap\bigcap\{\diamond(\downarrow\! e(G)): G{\in} \mathcal{K}_{t_0}{\cap} \mathcal{C}\}=\emptyset.$$
\noindent This contradiction confirms  Claim 2.

Now $K{\in} \bigcap\{\uparrow_{\mathcal{D}(X)}\!(\mathcal{K}_t{\cap} \mathcal{C}): t{\in} T\}{\subseteq} \bigcap\{\mathcal{K}_t: t{\in} T\}{\subseteq} \mathcal{U}$, which implies
$K{\in}\mathcal{U}$. But this contradicts Claim 1.

All these together  deduce that there must be some $t_0{\in} T$ such that $\mathcal{K}_{t_0}{\subseteq} \mathcal{U}$.

Hence  $\mathcal{D}(X)$ is well-filtered. The proof is completed.
\end{proof}
\vskip 0.5cm
The following result collects some of the equivalent conditions for a
space to be well-filtered.

\begin{thm}
For any $T_0$ space $X$, the following statements are equivalent:
\II
\I[(1)] $X$ is well-filtered.
\I[(2)] The upper space $\mathcal{D}(X)$ of $X$
is a d-space.
\I[(3)] The upper space $\mathcal{D}(X)$ of $X$ is well-filtered.
\III
\end{thm}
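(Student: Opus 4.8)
The plan is to establish the three equivalences by assembling implications that are already available, since by this point the substantive work is done and the statement is essentially a consolidation of what precedes it. The cleanest route is the cycle $(1)\Rightarrow(3)\Rightarrow(2)\Rightarrow(1)$, in which each arrow is either one of the results proved above or an immediate consequence of a clause of Remark \ref{j_X continuous}. I would phrase the proof as ``each implication follows from \dots'' rather than re-proving anything from scratch.

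For the individual arrows: $(1)\Rightarrow(3)$ is exactly the nontrivial direction of the theorem established immediately above, namely that if $X$ is well-filtered then so is $\mathcal{D}(X)$, so I would simply cite it. Next, $(3)\Rightarrow(2)$ is immediate from Remark \ref{j_X continuous}(3) (every well-filtered space is a d-space), applied to the space $\mathcal{D}(X)$. Finally, $(2)\Rightarrow(1)$ is the content of Remark \ref{j_X continuous}(4): there it is recorded that $X$ is well-filtered if and only if $\mathcal{D}(X)$ is a dcpo whose upper Vietoris topology is contained in $\sigma(\mathcal{D}(X))$, equivalently if and only if $\mathcal{D}(X)$ is a d-space; since (2) is precisely this d-space condition, it yields (1). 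Composing the three arrows closes the cycle and gives the mutual equivalence of (1), (2) and (3). (One could equally well read off $(1)\Leftrightarrow(2)$ directly from Remark \ref{j_X continuous}(4) and $(1)\Leftrightarrow(3)$ from the preceding theorem, but the cycle uses only the easy half of each remark plus the single nontrivial implication.)

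I expect no genuine obstacle at the level of this statement, because the only hard implication, $(1)\Rightarrow(3)$, was already dispatched in the preceding theorem by way of Rudin's Lemma (Lemma \ref{rudin's lemma}) together with the minimality sharpening in Remark \ref{Rudin's lemma} and the union lemma (Lemma \ref{union in DX}); here the remaining two arrows are one-line invocations of Remark \ref{j_X continuous}(3) and (4). The one point worth double-checking is the bookkeeping around the specialization order on $\mathcal{D}(X)$ being reverse inclusion, so that ``$\mathcal{D}(X)$ is a d-space'' in (2) matches exactly the dcpo-plus-topology formulation used in Remark \ref{j_X continuous}(4); once that identification is confirmed, the proof is complete.
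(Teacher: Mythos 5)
Your proposal is correct and matches the paper's approach: the paper states this theorem as a consolidation without a separate proof, relying on exactly the pieces you cite, namely the immediately preceding theorem for the equivalence of (1) and (3), and Remark~\ref{j_X continuous}(3),(4) for the arrows involving the d-space condition (2). Your cycle $(1)\Rightarrow(3)\Rightarrow(2)\Rightarrow(1)$ is a clean way to organize these citations, and your check that the specialization order on $\mathcal{D}(X)$ is reverse inclusion is exactly the identification the paper records after defining the upper space.
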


\noindent{\bf Acknowledgement} (1) The first author is sponsored by NSFC (11661057), the Ganpo
555 project for leading talent of Jiangxi Provence and the Natural
Science Foundation of Jiangxi Provence, China (20161BAB2061004).

(2) The second author is sponsored by NSFC (11361028, 61300153, 11671008, 11701500, 11626207) and NSF Project of Jiangsu Province, China (BK20170483).

(3) The third author is sponsored by NIE AcRF (RI 3/16 ZDS).

\end{document}